\documentclass[a4paper,11pt]{amsart}

\usepackage{amsfonts,amsmath,amssymb,mathrsfs,amsthm}
\usepackage{hyperref}
\hypersetup{colorlinks, urlcolor = blue, linkcolor = blue, citecolor = red}

\newtheorem{theorem}{Theorem}[section]
\newtheorem{proposition}[theorem]{Proposition}
\newtheorem{lemma}[theorem]{Lemma}
\newtheorem{corollary}[theorem]{Corollary}
\theoremstyle{definition}

\newtheorem{example}[theorem]{Example}
\theoremstyle{remark}
\newtheorem{remark}[theorem]{Remark}

\renewcommand{\geq}{\geqslant}
\renewcommand{\leq}{\leqslant}

\newcommand{\R}{\mathbb{R}}

\newcommand{\E}{\mathbb{E}}

\newcommand{\unit}{e}
\newcommand{\ba}{\mathsf{ba}}
\newcommand{\bbP}{\mathbb{P}}
\newcommand{\clo}[1]{\overline{#1}}
\def\<#1,#2>{\langle #1, #2\rangle}
\DeclareMathOperator{\ext}{ext}
\newcommand{\sft}{\mathsf{t}}

\title[Minimax representation of nonexpansive functions]{Minimax representation of nonexpansive functions and application to zero-sum recursive games}

\author[M. Akian]{Marianne Akian}
\author[S. Gaubert]{St\'ephane Gaubert}
\author[A. Hochart]{Antoine Hochart}
\address{INRIA Saclay-Ile-de-France and CMAP, Ecole polytechnique, Route de Saclay, 91128 Palaiseau Cedex, France}
\thanks{A.~Hochart has been supported by a PhD fellowship of Fondation Math\'ematique Jacques Hadamard (FMJH). The authors are also partially supported by the PGMO programme of EDF and FMJH and by the ANR through the MALTHY project, ANR-13-INSE-0003}
\email{marianne.akian@inria.fr}
\email{stephane.gaubert@inria.fr}
\email{antoine.hochart@polytechnique.edu}

\date{May 24, 2017}

\begin{document}

\begin{abstract}
  We show that a real-valued function on a topological vector space is positively homogeneous
  of degree one and nonexpansive with respect to a weak Minkowski norm if and only if it can be
  written as a minimax of linear forms that are nonexpansive with respect to the same norm.
  We derive a representation of monotone, additively and positively homogeneous functions on
  $L^\infty$ spaces and on $\mathbb{R}^n$, which extends results of Kolokoltsov, Rubinov,
  Singer, and others.
  We apply this representation to nonconvex risk measures and to zero-sum games.
  We derive in particular results of representation and polyhedral approximation for
  the class of Shapley operators arising from games without instantaneous payments
  (Everett's recursive games). 
\end{abstract}

\keywords{Nonexpansive maps, weak Minkowski norms, zero-sum games, recursive games, Shapley operators, risk measures, minimax representation}

\subjclass[2010]{49J35, 91A15, 26B25}
% 49J35 Minimax problems
% 91A15 Stochastic games
% 26B25 Convexity, generalizations

\maketitle

\section{Introduction}

Repeated zero-sum games can be studied by means of dynamic programming operators,
also known as {\em Shapley operators}.
For instance, in the case of a game with finite state space $\{1,\dots,n\}$ and perfect
information, the Shapley operator is a self-map of $\R^n$ given by
\begin{equation}
  \label{eq:ShapleyOperator}
  [f(x)]_i = \inf_{a \in A_i} \sup_{b \in B_{i,a}} \Big\{ r^{ab}_i + \sum_{j=1}^n P^{ab}_{ij} x_j \Big\} \enspace .
\end{equation}
Here, $A_i$ and $B_{i,a}$ are the (possibly infinite) action spaces in state $i$,
$r_i^{a b}$ is a stage payoff and $P_{i j}^{a b}$ gives the transition probability from
state $i$ to state $j$, when action $a$ is chosen by the first player and action $b$
is chosen by the second player.
We have that $P_{i j}^{a b} \geq 0$ and $\sum_{j=1}^n P_{i j}^{a b} = 1$.
This operator governs the evolution of the value function of the game.
We refer the reader to~\cite{FV97,NS03,Sor04} for more background.

Shapley operators are characterized by the following two properties:
they are {\em monotone}, i.e., order-preserving ($\R^n$ being endowed with the standard
partial order), and they are {\em additively homogeneous}, i.e., they commute with
the addition of a constant (a vector with all its entries equal).
The importance of these properties in the theory of dynamic programming was recognized early on,
in particular by Blackwell~\cite{Bla65} and Crandall and Tartar~\cite{CT80}.
For more recent studies, we refer the reader to the work of Rosenberg and Sorin~\cite{RS01a,Sor04}, providing a game-theoretic viewpoint,
and of Mart{\'\i}nez-Legaz, Rubinov and Singer~\cite{MLRS02},
providing an abstract convexity viewpoint. 
Order preserving and additively homogeneous maps are known to be nonexpansive in the sup-norm,
and also in certain {\em weak Minkowski norms}.
Such weak norms have been studied in metric geometry, in particular by Papadopoulos and Troyanov~\cite{PT14}.
They are substitutes of norms that are not necessarily symmetric or coercive,
and arise naturally in the study of Shapley operators, see~\cite{GV12}. 

Conversely, Kolokoltsov showed that every operator from $\R^n$ to $\R^n$
that satisfies the two above properties (monotonicity and additive homogeneity)
can be written as a Shapley operator~\cite{Kol92}.
His proof is based on a minimax representation formula of Evans,
which applies more generally to Lipschitz functions~\cite{Eva84}.
Rubinov and Singer~\cite{RS01b} showed that the transition probabilities in~\eqref{eq:ShapleyOperator}
may even be chosen to be degenerate (deterministic), i.e., such that every stochastic vector
$(P_{i j}^{a b})_{1 \leq j \leq n}$ may be required to have precisely one nonzero entry.  
Gunawardena and Sparrow obtained independently an equivalent result (see~\cite[Prop.~2.3]{Gun03}).
In the context of abstract convexity,
a representation of functions with similar properties over a cone
has been given in~\cite{DMLR04,DMLR08}.

An important subclass of monotone and additively homogeneous operators arises
by requiring every coordinate map $x\mapsto [f(x)]_i$ to be convex.
Operators in this class appear in stochastic control problems.
Then, one can obtain a one-player type representation which has the same form as~\eqref{eq:ShapleyOperator},
but with no infimum, by exploiting the Fenchel-Legendre duality~\cite{AG03}.
In infinite dimension, representation theorems for real functions with the same kind of properties
have been proven in the setting of convex risk measures~\cite{FS02,FRG02}.

Another important subclass is composed of {\em positively homogeneous} operators, i.e.,
those operators which commute with the product by a nonnegative constant.
They appear in Perron-Frobenius theory~\cite{GG04}, in the study of repeated zero-sum games~\cite{RS01a}.
In infinite dimension, convex risk measures satisfying the positive homogeneity property
are referred to as coherent risk measures~\cite{ADEH99,Del02}.

In this paper, we first establish a general minimax representation theorem
which applies to real functions on a topological vector space
that are nonexpansive with respect to a weak Minkowski norm (Theorem~\ref{thm:RepresentationN}). 
We also characterize the nonexpansive maps that are positively homogeneous of degree $1$
(Theorem~\ref{thm:RepresentationHN}).
As a corollary, we arrive at our main application:
a representation theorem for monotone, additively
and positively homogeneous operators from $\R^n$ to $\R^n$ (Corollary~\ref{coro:PaymentFreeOperator})
showing that they correspond precisely to the class of zero-sum games in which the payment occurs only at the last stage (Everett's recursive games). This is
motivated by the ``operator approach'' to zero-sum games~\cite{RS01a}, 
in which properties of such games are derived from axiomatic
properties of their dynamic programming operators: we show
that the dynamic programming operators of the subclass of recursive games
are characterized axiomatically as well.

This representation also leads to an approximation of such operators by polyhedral maps (involving a finite number of $\min$ and $\max$ operations). 
Such approximations can been used in the setting of ``max-plus basis methods'' for the numerical solution of Hamilton-Jacobi type PDE,
see~\cite{bookofmac,AGL08} for background, and the recent work of McEneaney and Pandey~\cite{McEP15} for an application of minimax approximations. 
We also arrive at a representation theorem for nonconvex risk measures.

\section{Preliminary results}
\label{sec:PreliminaryResults}

Throughout the paper, we denote by $V$ a real topological vector space (TVS).
We denote by $V^*$ the dual space of $V$ (that is, the space of linear forms on $V$), by $V'$ its topological dual space (that is, the space of continuous linear forms on $V$), and by $\<\cdot,\cdot>$ the duality product.

We shall specially consider the situation in which $V$ is a vector space with an (Archimedean) order unit.
I.e., we assume that $V$ is a real vector space with an order relation $\leq$ that is compatible with the algebraic structure of $V$, that is, satisfying the following two axioms:
\begin{itemize}
  \item $x \leq y \implies x+z \leq y+z, \enspace \forall z \in V$ ;
  \item $x \leq y \implies \lambda x \leq \lambda y, \enspace \forall \lambda \in \R_+$ ,
\end{itemize}
where $\R_+$ is the set of nonnegative real numbers.
We also assume that $V$ is equipped with a special vector $\unit_V$ (or simply $\unit$
if the context is clear), called an {\em order unit}, and such that,
for every $x \in V$, there exists a scalar $\lambda > 0$ with $x \leq \lambda \unit_V$.
Finally, we assume that this order unit is {\em Archimedean}, i.e., such that,
for any $x \in V$, we have $x \geq 0$ if $\lambda \unit_V + x \geq 0$ for all $\lambda > 0$.
Such a space will be endowed with the topology defined by the following norm:
\begin{equation}
  \label{eq:SeminormUnit}
  \|x\|_{e_V} = \inf \{ \lambda \in \R \mid -\lambda \unit_V \leq x \leq \lambda \unit_V \} \enspace ,
\end{equation}
see~\cite{PT09}.
Note that if $V$ is the Euclidean space $\R^n$ equipped with the standard partial order, then the standard unit vector of $\R^n$ is an order unit, and the corresponding norm~\eqref{eq:SeminormUnit} is the usual sup-norm. Hence, we will abusively refer to~\eqref{eq:SeminormUnit} as the ``sup-norm'' even in general situations.

An important particular case is obtained when $V$ is an AM-space with unit, i.e., a Banach lattice equipped with an order unit, such that the norm satisfies~\eqref{eq:SeminormUnit}, see~\cite{AB06}.
By the Kakutani-Krein theorem, any AM-space with unit is isomorphic (lattice isometric) to a space $\mathcal{C}(K)$ of continuous functions over some compact Hausdorff set $K$, equipped with the sup-norm,
see~\cite[Th.~8.29]{AB06} or \cite[Ch.~II, Th.~7.4]{Sch74}.

Given two vector spaces with an order unit, $(V,\unit_V)$ and $(W,\unit_W)$, we will be interested in maps $f: V \to W$ that satisfy some of the following properties:
\begin{align*}
  && \text{monotonicity:} \quad & x \leq y \implies f(x) \leq f(y) \enspace ; \tag{\textsf{M}} \label{eq:M}\\
  && \text{additive homogeneity:} \quad & f(\lambda \unit_V + x) = \lambda \unit_W + f(x), \enspace \lambda \in \R \enspace ; \tag{\textsf{AH}} \label{eq:AH}\\
  && \text{additive subhomogeneity:} \quad & f(\lambda \unit_V + x) \leq \lambda \unit_W + f(x), \enspace \lambda \in \R_+ \enspace ; \tag{\textsf{ASH}} \label{eq:ASH}\\ 
  && \text{sup-norm nonexpansiveness:} \quad & \|f(x)-f(y)\|_{e_W} \leq \|x-y\|_{e_V} \enspace ; \tag{\textsf{N}} \label{eq:N}\\
  && \text{positive homogeneity:} \quad & f(\lambda x) = \lambda f(x), \enspace \lambda \in \R_+ \enspace . \tag{\textsf{H}} \label{eq:H}
\end{align*}

The importance of the monotonicity and the additive homogeneity properties in optimal control and game theory is well known.
Crandall and Tartar~\cite{CT80} showed that
\[
  \eqref{eq:M} \enspace \text{and} \enspace \eqref{eq:AH} \enspace \iff \enspace \eqref{eq:N} \enspace \text{and} \enspace \eqref{eq:AH} \enspace ,
\]
when $V = W $ is a $L^\infty$ space. 
It is also known that
\[
  \eqref{eq:M} \enspace \text{and} \enspace \eqref{eq:ASH} \enspace \iff \enspace \eqref{eq:M} \enspace \text{and} \enspace \eqref{eq:N} \enspace ,
\]
see, e.g.,~\cite{AG03}.
These relations are readily generalized to any vector space with an order unit.

The monotonicity and additive homogeneity properties turn out to be related with nonexpansiveness in {\em weak Minkowski norms}.  By weak Minkowski norm on $V$, we mean a function $q : V \to \R$ that is convex and positively homogeneous (property~\eqref{eq:H}), but not necessarily symmetric (we do not require that $q(x)=q(-x)$). Our definition is a variant of the one in~\cite{PT14}, where $q$ is also required to be nonnegative and may take infinite values. 
We say that a real function on $V$, $f : V \to \R$, is {\em nonexpansive} with respect to $q$ if $f(x)-f(y) \leq q(x-y)$ for every $x,y \in V$.

When $V=\R^n$, a useful example of weak Minkowski norm, arising in Hilbert geometry~\cite{PT14}, is the ``top'' map $\sft$
defined by
\[
  \sft(z):= \max_{1 \leq i \leq n} z_i
\]
or its variant, 
\[ \sft^+(z) := \max(\sft(z),0)
  \enspace.
\]

When $V=W=\R^n$, we shall consider the following properties:
\begin{align*}
  && \text{$\sft$-nonexpansiveness:} \quad & \sft (f(x)-f(y)) \leq \sft (x-y) \enspace ; \tag{$\mathsf{N}_\sft$} \label{eq:NT}\\
  && \text{$\sft^+$-nonexpansiveness:} \quad & \sft^+ (f(x)-f(y)) \leq \sft^+ (x-y) \enspace . \tag{$\mathsf{N}^+_\sft$} \label{eq:NTP}
\end{align*}
Gunawardena and Keane showed in~\cite{GK95} that
\begin{align*}
  \eqref{eq:M} \enspace \text{and} \enspace \eqref{eq:AH} \enspace & \iff \enspace \eqref{eq:NT} \enspace , \\
  \eqref{eq:M} \enspace \text{and} \enspace \eqref{eq:ASH} \enspace & \iff \enspace \eqref{eq:NTP} \enspace .
\end{align*}
Again, these relations can be readily generalized to any vector space with an order unit, defining, for a vector space $V$ with an order unit $\unit$, the function $\sft$ by
\begin{equation}
  \label{eq:Top}
  \sft(x) = \inf \{ \lambda \in \R \mid x \leq \lambda \unit \} \enspace.
\end{equation}

\section{Representation theorems}

In this section, we give a general minimax characterization of nonexpansive functions with respect to a weak Minkowski norm, which will lead to extension
and refinements of the known minimax representations of Shapley operators.

\subsection{Representation of nonexpansive functions with respect to weak Minkowski norms}

We shall need the following lemma, which is a variation on Legendre-Fenchel duality.
\begin{lemma}
  \label{lem:RepresentationWeakNorm}
  Let $V$ be a real TVS and let $q: V \to \R$ be a weak Minkowski norm, continuous with respect to the topology of $V$.
  Then, for every $x \in V$,
  \begin{equation}
    \label{eq:RepresentationWeakNorm}
    q(x) = \max_{p \in P} \<p,x> \enspace ,
  \end{equation}
  where $P := \{ p \in V^* \mid \forall x \in V, \enspace \<p,x> \leq q(x) \}$ is a nonempty convex set of $V'$, compact for the weak* topology.
\end{lemma}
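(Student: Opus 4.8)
The plan is to recognize the set $P$ as the subdifferential $\partial q(0)$ of $q$ at the origin and to prove that $q$ coincides with the support function of $P$; this is the classical duality for sublinear functionals, and the real work lies in controlling continuity and compactness in the general TVS setting. First I record the elementary structural facts. Since $q$ is positively homogeneous and convex, it is sublinear: $q(0)=0$ and $q(x+y)\le q(x)+q(y)$. Convexity of $P$ is immediate, because each defining condition $\langle p,x\rangle\le q(x)$ is affine in $p$ and $P$ is their common solution set. Moreover every $p\in P$ lies in $V'$: combining $\langle p,x\rangle\le q(x)$ with $\langle p,-x\rangle\le q(-x)$ yields the two-sided bound $-q(-x)\le\langle p,x\rangle\le q(x)$, and since $q$ is continuous with $q(0)=0$, both sides tend to $0$ as $x\to 0$, so $p$ is continuous at the origin and hence continuous.

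Next I establish the representation together with nonemptiness. The inequality $\sup_{p\in P}\langle p,x\rangle\le q(x)$ holds by the very definition of $P$, so it suffices to exhibit, for each fixed $x_0\in V$, a functional $p\in P$ with $\langle p,x_0\rangle=q(x_0)$. On the line $L=\R x_0$ I define the linear form $\ell(t x_0)=t\,q(x_0)$ and check $\ell\le q$ on $L$: for $t\ge 0$ this is the equality $q(t x_0)=t\,q(x_0)$, while for $t<0$ it reduces to $q(x_0)+q(-x_0)\ge 0$, which is exactly subadditivity applied to $x_0$ and $-x_0$. The Hahn--Banach dominated extension theorem then produces a linear $p:V\to\R$ with $p\le q$ everywhere and $p|_L=\ell$; thus $p\in P$ and $\langle p,x_0\rangle=q(x_0)$. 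This simultaneously shows $P\neq\emptyset$ and that the supremum is attained, justifying the $\max$ in~\eqref{eq:RepresentationWeakNorm}.

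It remains to prove weak* compactness, which I expect to be the delicate step. The set $P$ is weak* closed, being an intersection over $x\in V$ of the weak*-closed half-spaces $\{p:\langle p,x\rangle\le q(x)\}$. For compactness I invoke the Banach--Alaoglu theorem, for which I must exhibit $P$ as an equicontinuous family, i.e.\ place it inside the polar of a neighbourhood of the origin. Here continuity of $q$ at $0$ is essential: choosing a balanced neighbourhood $U$ of $0$ on which $q\le 1$, and using $U=-U$, I obtain $|\langle p,x\rangle|\le 1$ for all $p\in P$ and $x\in U$, whence $P\subseteq U^\circ$. Since $U^\circ$ is weak* compact by Banach--Alaoglu and $P$ is a weak*-closed subset of it, $P$ is weak* compact. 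The only point requiring genuine care is this equicontinuity argument, since it is the sole place where continuity of $q$, rather than mere sublinearity, enters decisively; the rest is a routine application of the Hahn--Banach and Alaoglu machinery.
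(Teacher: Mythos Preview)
Your proof is correct and follows essentially the same route as the paper's: Hahn--Banach extension from the line $\R x_0$ to produce a $p\in P$ achieving $\langle p,x_0\rangle=q(x_0)$, continuity of each $p\in P$ via the two-sided bound $-q(-x)\le\langle p,x\rangle\le q(x)$, and weak* closedness from the intersection of half-spaces. The only cosmetic difference is in the compactness step: the paper uses pointwise boundedness together with a direct appeal to Tychonoff's product theorem, whereas you show $P$ is equicontinuous (contained in $U^\circ$ for a balanced neighbourhood $U$) and invoke Banach--Alaoglu, which is the same Tychonoff argument repackaged.
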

\begin{proof}
  For $x \in V \setminus \{0\}$ we have, by definition of $P$, $q(x) \geq  \sup_{p \in P} \<p,x>$.
  Let $p$ be the linear form defined on the vector subspace $\R x$ of $V$ generated by $x$, such that $\<p,x>=q(x)$.
  If $\lambda \geq 0$, then $\<p,\lambda x> = \lambda \<p,x> = \lambda q(x) = q(\lambda x)$, since $q$ is positively homogeneous.
  If $\lambda < 0$, then $\<p,\lambda x> = \lambda \<p,x> = - |\lambda| q(x) \leq |\lambda| q(-x) = q(\lambda x)$, the inequality coming from the convexity of the weak norm: $0 = q(0) \leq q(x)+q(-x)$.
  Hence $p$ is dominated by $q$ on the vector subspace $\R x$ and according to the Hahn-Banach extension theorem~\cite[Th.~5.53]{AB06}, there is a linear extension $\hat p$ of $p$ to $V$ that is dominated by $q$ on $V$.
  Therefore, $\hat p \in P$ and $q(x)=\<\hat p,x>$, which shows that $q(x) = \max_{p \in P} \<p,x>$.
  This also proves that $P$ is nonempty.

  Note that, since $q$ is continuous, every linear form $p$ dominated by $q$ is also continuous, hence $P \subset V'$.
  The convexity of $P$ is straightforward and since this set can also be written as $\bigcap_{x \in V} \{ p \in V^* \mid \<p,x> \leq q(x) \}$, we deduce that $P$ is a weak* closed set.
  Furthermore, for every $x \in V$ and every $p \in P$ we have $-q(-x) \leq \<p,x> \leq q(x)$.
  Hence, $P$ is pointwise bounded.
  Applying the Tychonoff Product theorem~\cite[Th.~2.61]{AB06}, we deduce that $P$ is weak* compact.
\end{proof}

\begin{remark}
  In Lemma~\ref{lem:RepresentationWeakNorm}, we did not assume that $V$ is Hausdorff and locally convex.
  When these properties hold, Lemma~\ref{lem:RepresentationWeakNorm} becomes a direct application of~\cite[Th.~7.52]{AB06}, which applies more generally to dual pairs.
  Alternatively, this result can be obtained using the Legendre-Fenchel duality for convex proper lower semicontinuous functions~\cite[Prop.~4.1]{ET99}, still assuming that $V$ is locally convex and Hausdorff.
\end{remark}

The following simple observation shows that the maximum in~\eqref{eq:RepresentationWeakNorm} is attained in the closure of the set of extreme points of $P$.
\begin{lemma}
  \label{lem:RepresentationWeakNorm2}
  Let $q: V \to \R$ be a continuous weak Minkowski norm, and define $P$ as in Lemma~\ref{lem:RepresentationWeakNorm}.
  Denote by $\ext{P}$ the set of extreme points of $P$, and by $\clo{\ext{P}}$ its closure with respect to the weak* topology.
  Then, for every $x \in V$,
  \begin{equation}
    \label{eq:RepresentationWeakNorm2}
    q(x) = \sup_{p \in \ext{P}} \<p,x> = \max_{p \in \clo{\ext{P}}} \<p,x> \enspace .
  \end{equation}
  In particular, if $\ext{P}$ is closed, then the maximum in~\eqref{eq:RepresentationWeakNorm} is attained at an extreme point of $P$.
\end{lemma}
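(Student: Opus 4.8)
The plan is to deduce the statement from the Krein--Milman theorem, the essential input being the weak* compactness of $P$ furnished by Lemma~\ref{lem:RepresentationWeakNorm}. No further hypotheses on $V$ are needed: the dual pairing makes $(V^*,\sigma(V^*,V))$ a locally convex Hausdorff topological vector space, since the weak* topology is generated by the seminorms $p \mapsto |\<p,x>|$ ($x\in V$), and these separate points. Thus the statement will hold in exactly the generality of Lemma~\ref{lem:RepresentationWeakNorm}.

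First I would apply the Krein--Milman theorem to the nonempty, convex, weak* compact set $P$. This yields that $\ext P$ is nonempty and that $P$ equals the weak* closed convex hull $\clo{\operatorname{conv}(\ext P)}$. Next, fixing $x \in V$, I consider the linear form $\phi : p \mapsto \<p,x>$, which is weak* continuous by the very definition of the weak* topology. Because $\phi$ is linear, its supremum over $\ext P$ coincides with its supremum over the convex hull $\operatorname{conv}(\ext P)$ (a convex combination cannot exceed the largest of its terms); because $\phi$ is continuous, this supremum is unchanged upon passing to the weak* closure $\clo{\operatorname{conv}(\ext P)}$, which by Krein--Milman is precisely $P$. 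Combined with~\eqref{eq:RepresentationWeakNorm}, this gives
\[
  \sup_{p \in \ext P} \<p,x> = \max_{p \in P} \<p,x> = q(x) \enspace,
\]
the first equality of the statement.

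For the second equality I would observe that $\clo{\ext P}$, as a weak* closed subset of the weak* compact set $P$, is itself weak* compact; hence the continuous map $\phi$ attains its supremum there. Since $\ext P$ is dense in $\clo{\ext P}$ and $\phi$ is continuous, that supremum equals $\sup_{p\in\ext P}\<p,x> = q(x)$, so indeed $q(x) = \max_{p \in \clo{\ext P}} \<p,x>$. The final clause is then immediate: if $\ext P$ is weak* closed, then $\clo{\ext P}=\ext P$, and the maximum in~\eqref{eq:RepresentationWeakNorm} is attained at an extreme point.

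I expect the only genuine subtlety to lie in the distinction between the supremum over $\ext P$ and the maximum over $\clo{\ext P}$: in general $\ext P$ need not be weak* closed, so the supremum over $\ext P$ need not be attained, and one must pass to the closure before claiming a bona fide maximum. A tempting alternative is the Bauer maximum principle, which would directly assert that $\phi$ attains its maximum over $P$ at an extreme point, hence that the supremum over $\ext P$ is in fact attained; that route is equally valid, but I would retain the Krein--Milman formulation to match the stated conclusion, which only claims a supremum over $\ext P$.
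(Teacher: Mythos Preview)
Your proof is correct and follows essentially the same approach as the paper: both invoke the Krein--Milman theorem to write $P$ as the weak* closed convex hull of $\ext P$, then use linearity and weak* continuity of $p\mapsto\<p,x>$ to carry the supremum from $\ext P$ to $P$, and finally use weak* compactness of $\clo{\ext P}$ to turn the supremum into a maximum. The only cosmetic difference is that the paper spells out the passage to the closure via an explicit net argument, whereas you state the general fact that a continuous function has the same supremum on a set and on its closure; your aside on the Bauer maximum principle is a valid alternative not mentioned in the paper.
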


\begin{proof}
  We already showed in Lemma~\ref{lem:RepresentationWeakNorm} that for every $x \in V$, the supremum
  \[ q(x) = \sup_{p\in {P}} \<p,x> \]
  is attained at some point $r \in P$.
  It follows from the Krein-Milman theorem that $P$ is the closed convex hull of the set $\ext{P}$, the closure being understood with respect to the weak* topology.
  Hence, there exists a net $(r_\alpha)$ of elements of the convex hull of $\ext{P}$ that converges to $r$ in this topology.
  In particular, every $r_\alpha$ can be written as a finite sum $r_\alpha = \sum_{1 \leq i \leq m} \lambda_i p_i$ with $\lambda_i \geq 0, \enspace \sum_{1 \leq i \leq m} \lambda_i = 1$, and $p_i \in \ext{P}$, for some $m \geq 1$, and so, $\<r_\alpha,x> = \sum_{1 \leq i \leq m} \lambda_i \<p_i,x> \leq \sup_{p \in \ext{P}} \<p,x>$. 
  We deduce that $q(x) = \lim_\alpha \<r_\alpha,x> \leq \sup_{p \in \ext{P}} \<p,x>$.
  The opposite inequality $q(x) \geq \sup_{p \in \ext{P}} \<p,x>$ follows readily from~\eqref{eq:RepresentationWeakNorm}.
  Using  the weak* continuity of the map $p \mapsto \<p,x>$, we get $\sup_{p \in \ext{P}} \<p,x>= \sup_{p\in\clo{\ext{P}}}\<p,x>$.
  Finally, since $\clo{\ext{P}}$, which is a weak* closed subset of the weak* compact set $P$, is also weak* compact, we deduce that the latter supremum is attained.
\end{proof}

We deduce from the previous lemmas a minimax representation theorem that directly extends the result of Rubinov and Singer~\cite[Th.~5.3]{RS01b}.
\begin{theorem}
  \label{thm:RepresentationN}
  Let $V$ be a real TVS and let $q:V \to \R$ be a weak Minkowski norm, continuous with respect to the topology of $V$.
  Denote by $P := \{ p \in V^* \mid \forall x \in V, \enspace \<p,x> \leq q(x) \}$.
  A function $f:V \to \R$ is nonexpansive with respect to $q$ if, and only if,
  \begin{equation}
    \label{eq:RepresentationN}
    f(x) = \min_{y \in V} \; \max_{p \in P} \; \left\{ \<p,x-y> + f(y) \right\} \enspace , \quad \forall x \in V \enspace .
  \end{equation}
  Moreover, the value of the latter expression does not change if the maximum is restricted to the points $p \in \clo{\ext{P}}$.
\end{theorem}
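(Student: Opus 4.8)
The plan is to use Lemma~\ref{lem:RepresentationWeakNorm} to collapse the inner maximization. Since that lemma gives $q(z) = \max_{p \in P} \langle p, z\rangle$ for every $z \in V$, applying it with $z = x-y$ yields $\max_{p \in P}\{\langle p, x-y\rangle + f(y)\} = q(x-y) + f(y)$. Hence the right-hand side of~\eqref{eq:RepresentationN} is nothing but the infimal-convolution-type expression $\inf_{y \in V}\{q(x-y) + f(y)\}$, and the theorem is equivalent to the assertion that $f$ is nonexpansive with respect to $q$ if and only if
\[
  f(x) = \min_{y \in V}\{q(x-y) + f(y)\} \quad \text{for all } x \in V \enspace .
\]
I would establish this reformulated equivalence directly.

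For the ``if'' direction, suppose $f$ satisfies the displayed identity. Then for any $x, x' \in V$, choosing $y = x'$ in the minimum gives $f(x) \leq q(x-x') + f(x')$, that is, $f(x) - f(x') \leq q(x-x')$, which is exactly nonexpansiveness with respect to $q$.

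For the ``only if'' direction, assume $f$ is nonexpansive. By definition $f(x) - f(y) \leq q(x-y)$ for every $y$, so $f(x) \leq q(x-y) + f(y)$ and therefore $f(x) \leq \inf_{y}\{q(x-y) + f(y)\}$. For the reverse inequality I would take $y = x$: since $q$ is positively homogeneous we have $q(0) = 0$, whence $q(x-x) + f(x) = f(x)$ and thus $\inf_{y}\{q(x-y)+f(y)\} \leq f(x)$. Combining the two bounds shows that the infimum equals $f(x)$ and is attained at $y = x$, so it is a genuine minimum, as required.

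Finally, for the additional claim I would replace Lemma~\ref{lem:RepresentationWeakNorm} by Lemma~\ref{lem:RepresentationWeakNorm2}: since $q(z) = \max_{p \in \clo{\ext P}}\langle p, z\rangle$ as well, the inner maximum restricted to $\clo{\ext P}$ still equals $q(x-y) + f(y)$, and the preceding argument applies verbatim. I do not expect a serious obstacle; the only points demanding a little care are the identity $q(0) = 0$ and the observation that the infimum is actually attained (at $y = x$), which turns it into a minimum.
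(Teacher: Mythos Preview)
Your proposal is correct and follows essentially the same approach as the paper: both arguments reduce the minimax in~\eqref{eq:RepresentationN} to the infimal convolution $\min_{y}\{q(x-y)+f(y)\}$ via Lemma~\ref{lem:RepresentationWeakNorm}, observe that nonexpansiveness gives $f(x)\leq q(x-y)+f(y)$ with equality at $y=x$, and invoke Lemma~\ref{lem:RepresentationWeakNorm2} for the extreme-point refinement. The only cosmetic difference is that you collapse the inner maximum first and then prove both directions of the equivalence, whereas the paper establishes the infimal-convolution identity for nonexpansive $f$ first and then substitutes the representation of $q$; your treatment of the converse direction is also slightly more explicit than the paper's.
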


\begin{proof}
  If $f$ is nonexpansive with respect to $q$, then we have, by definition, $f(x)-f(y) \leq q(x-y)$ for every $x,y \in V$.
  We readily deduce that $f(x) = \min_{y \in V} q(x-y)+f(y)$, the minimum being attained in $x$.
  Replacing $q$ by its expression given in~\eqref{eq:RepresentationWeakNorm}, we get the minimax representation formula~\eqref{eq:RepresentationN}.
  The remaining part of the theorem follows directly from Lemma~\ref{lem:RepresentationWeakNorm2}.
  Conversely, as a consequence of Lemma~\ref{lem:RepresentationWeakNorm}, any real function given by~\eqref{eq:RepresentationN} is easily seen to be nonexpansive with respect to $q$.
\end{proof}

The minimax representation in Theorem~\ref{thm:RepresentationN}
has the following dual maximin version, the minimum and maximum being taken over
the same sets. 
\begin{corollary} \label{thm:RepresentationNdual}
  Let $V$, $q$ and $P$ be as in Theorem~\ref{thm:RepresentationN}.
  A function $f:V \to \R$ is nonexpansive with respect to $q$ if, and only if,
  \begin{equation}
    \label{eq:RepresentationNdual}
    f(x) = \max_{y \in V} \; \min_{p \in P} \; \left\{ \<p,x-y> + f(y) \right\} \enspace , \quad \forall x \in V \enspace .
  \end{equation}
  Moreover, the value of the latter expression does not change if the minimun is restricted to the points $p \in \clo{\ext{P}}$.
\end{corollary}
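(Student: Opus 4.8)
The plan is to deduce the corollary from Theorem~\ref{thm:RepresentationN} by a sign-reversal trick, applying the theorem not to $f$ and $q$ but to $-f$ and the \emph{reversed} weak Minkowski norm $\check q$ defined by $\check q(x) := q(-x)$. First I would check that $\check q$ is again a continuous weak Minkowski norm: convexity and positive homogeneity are preserved because $x \mapsto -x$ is linear, and continuity is preserved because $\check q = q \circ (-\mathrm{id})$. The crucial observation is that, while $-f$ need not be nonexpansive with respect to $q$, it \emph{is} nonexpansive with respect to $\check q$: indeed $(-f)(x)-(-f)(y) = f(y)-f(x) \leq q(y-x) = \check q(x-y)$, using the nonexpansiveness of $f$ with the roles of $x$ and $y$ exchanged.

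Next I would identify the dual set attached to $\check q$. Writing $\check P := \{ p \in V^* \mid \<p,x> \leq \check q(x), \ \forall x \in V\}$, the substitution $x \mapsto -x$ shows immediately that $\check P = -P$. Applying Theorem~\ref{thm:RepresentationN} to the pair $(-f, \check q)$ then yields
\[ -f(x) = \min_{y \in V}\;\max_{p \in -P}\;\{ \<p, x-y> - f(y)\} \enspace . \]
Substituting $p = -p'$ with $p' \in P$ turns the inner maximum into $\max_{p' \in P}\{ -\<p', x-y> - f(y)\}$; negating both sides and using $-\min = \max(-\,\cdot\,)$ converts the outer minimum into a maximum and the inner maximum into a minimum, producing exactly~\eqref{eq:RepresentationNdual}. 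The converse implication follows the same route in reverse: any $f$ of the form~\eqref{eq:RepresentationNdual} gives a $-f$ of the form~\eqref{eq:RepresentationN} for the pair $(-f,\check q)$ with dual set $-P$, which is $\check q$-nonexpansive by Theorem~\ref{thm:RepresentationN}, whence $f$ is $q$-nonexpansive.

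For the refinement restricting the minimum to $\clo{\ext{P}}$, I would note that $p \mapsto -p$ is a weak* homeomorphism of $V'$, so it sends extreme points to extreme points and commutes with weak* closure; hence $\clo{\ext{\check P}} = \clo{\ext{(-P)}} = -\clo{\ext{P}}$. The corresponding clause of Theorem~\ref{thm:RepresentationN} for $(-f,\check q)$ therefore transfers, under the same substitution $p = -p'$, to the restriction $p' \in \clo{\ext{P}}$ in~\eqref{eq:RepresentationNdual}.

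The only genuine obstacle here is bookkeeping: one must track the two independent sign flips (from $f$ to $-f$, and from $p$ to $-p$) and verify that they conspire to interchange $\min$ and $\max$ while leaving the index sets $V$ and $P$ unchanged, which is precisely the content of the statement. As an independent check --- and an alternative, self-contained proof avoiding the reversed norm --- one can argue directly: taking $y=x$ shows the right-hand side of~\eqref{eq:RepresentationNdual} is $\geq f(x)$, while for each $y$ the identity $\min_{p\in P}\<p,x-y> = -\max_{p\in P}\<p,y-x> = -q(y-x)$ (from Lemma~\ref{lem:RepresentationWeakNorm}) combined with $-q(y-x) \leq f(x)-f(y)$ gives the reverse inequality, the maximum being attained at $y=x$; the restriction to $\clo{\ext{P}}$ then follows verbatim using Lemma~\ref{lem:RepresentationWeakNorm2} in place of Lemma~\ref{lem:RepresentationWeakNorm}.
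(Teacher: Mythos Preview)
Your proposal is correct and follows essentially the same route as the paper: apply Theorem~\ref{thm:RepresentationN} to $-f$ with the reversed weak Minkowski norm $q'(x)=q(-x)$, use $P'=-P$, and negate. The paper's proof is terser (it omits the bookkeeping you spell out and the extreme-point transfer), and your additional direct argument via Lemmas~\ref{lem:RepresentationWeakNorm} and~\ref{lem:RepresentationWeakNorm2} is a sound independent check not present in the paper.
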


\begin{proof}
  From the definition, it
  is easy to check that $f$ is nonexpansive with respect to $q$ if and
  only if $-f$ is nonexpansive with respect to $q':x\mapsto q(-x)$ and that 
  $q'$ satisfies Lemma~\ref{lem:RepresentationWeakNorm}
  with the set $P'=-P$ instead of $P$.
  Applying \eqref{eq:RepresentationN} to $-f$, $q'$ and $P'$
  gives~\eqref{eq:RepresentationNdual}.
\end{proof}
Similarly, all the subsequent results admit dual versions.

\begin{remark}\label{rk:convex}
  If $f$ is also assumed to be convex,
  the minimization and maximization in~\eqref{eq:RepresentationN} 
  can be switched to obtain:
  \[     f(x) =  \max_{p \in P} \; \min_{y \in V} \; \left\{ \<p,x-y> + f(y) \right\} \enspace , \quad \forall x \in V \enspace . \] 
  This new representation is equivalent to the property that $f$ is equal
  to $f^{**}$, where $f^*$ denotes the Fenchel-Legendre transform of $f$, 
  together to the one that the subdifferential of $f$ 
  is necessarily included in $P$.
  This leads to a one-player type representation 
  $f(x) =  \max_{p \in P} \; \left\{ \<p,x> - f^*(p) \right\}$, 
  which has the same form  as~\eqref{eq:ShapleyOperator},
  but with no infimum. Representation formul\ae\ of this nature
  have appeared in the theory of convex risk measures~\cite{FS02,Del02} and
  also in the setting of Markov decision processes~\cite{AG03} when $V=\R^n$.
\end{remark}

\begin{remark}\label{rk:moreau}
  As remarked in the proof of Theorem~\ref{thm:RepresentationN},
  $f$ is nonexpansive with respect to $q$ if, 
  and only if, $f(x) = \min_{y \in V} q(x-y)+f(y)$, for all $x\in V$.
  The dual version also applies:
  $f$ is nonexpansive with respect to $q$ if, 
  and only if, $f(x) = \max_{y \in V} -q(y-x)+f(y)$, for all $x\in V$.
  As noted in~\cite[Prop.~6.7]{AGK},
  these conditions can be interpreted in terms of Moreau conjugacies~\cite{moreau70}, meaning that $-f=f^c$ and $f=(-f)^{c'}$, where
  $f^c$ is the Moreau conjugacy of $f$ with respect to the 
  coupling function $c:(x,y)\in V\times V\mapsto -q(x-y)\in \R$,
  and $c'(x,y):=c(y,x)=-q(y-x)$.
  Using these two conditions, one deduces that $f:V\to \R$ is nonexpansive 
  with respect to $q$ if, and only if, $f=f^{cc'}$.
  This leads to a minimax representation which does not have the form of~\eqref{eq:ShapleyOperator}. 
\end{remark}
\begin{corollary}
  \label{coro:RepresentationMAH}
  Let $V$ be a vector space with an order unit $\unit$, and the topology defined by the norm~\eqref{eq:SeminormUnit}.
  Let
  \[
    \Delta := \{ p \in V^* \mid \<p,\unit> = 1, \enspace \<p,x> \geq 0, \enspace \forall x \in V, x \geq 0 \}
    \enspace .
  \]
  Then, a function $f:V \to \R$ is monotone and additively homogeneous if, and only if, 
  \[
    f(x) = \min_{y \in V} \; \max_{p \in \Delta} \; \left\{ \<p,x-y> + f(y) \right\} \enspace .
  \]
  Moreover, the value of the latter expression is not changed if the minimum is restricted to those $y \in V$ such that $f(y)=0$, or if the maximum is restricted to the points $p \in \clo{\ext{\Delta}}$.
\end{corollary}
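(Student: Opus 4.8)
The plan is to specialize Theorem~\ref{thm:RepresentationN} to the weak Minkowski norm $q = \sft$ given by~\eqref{eq:Top}, and to check that the set $P$ attached to it coincides with $\Delta$. First I would verify that $\sft$ is a continuous weak Minkowski norm on $V$. Finiteness of $\sft(x)$ follows from the order-unit axiom, which provides $\lambda > 0$ with $x \leq \lambda\unit$, while the same axiom applied to $-x$ together with the Archimedean property bounds $\sft(x)$ from below. Positive homogeneity and subadditivity (hence convexity) are immediate: if $x \leq \lambda\unit$ and $y \leq \mu\unit$ then $x + y \leq (\lambda+\mu)\unit$. Continuity comes from the Lipschitz estimate $|\sft(x) - \sft(y)| \leq \|x-y\|_{e_V}$, itself a consequence of $\sft(z) \leq \|z\|_{e_V}$.

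Next I would identify $P := \{p \in V^* \mid \<p,x> \leq \sft(x),\ \forall x \in V\}$ with $\Delta$. For the inclusion $P \subseteq \Delta$, testing the defining inequality at $x = \unit$ and $x = -\unit$ yields $\<p,\unit> = 1$, using $\sft(\unit) = 1$ and $\sft(-\unit) = -1$; testing at $-x$ for $x \geq 0$ gives $\<p,x> \geq 0$, since then $\sft(-x) \leq 0$. For the reverse inclusion, given $p \in \Delta$ and any $\lambda$ with $x \leq \lambda\unit$, the vector $\lambda\unit - x$ is nonnegative, so $0 \leq \<p,\lambda\unit - x> = \lambda - \<p,x>$; taking the infimum over such $\lambda$ gives $\<p,x> \leq \sft(x)$. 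Hence $P = \Delta$.

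Then I would establish the equivalence between the pair~\eqref{eq:M}, \eqref{eq:AH} and nonexpansiveness of $f$ with respect to $\sft$, i.e.\ $f(x) - f(y) \leq \sft(x-y)$. This is the real-valued analogue of the Gunawardena--Keane equivalence and is short enough to prove directly. If $f$ is monotone and additively homogeneous, then $x \leq y + \lambda\unit$ forces $f(x) \leq f(y) + \lambda$, and taking the infimum over $\lambda$ with $x - y \leq \lambda\unit$ gives nonexpansiveness. Conversely, applying nonexpansiveness to $x + \mu\unit$ against $x$ and then the reverse yields~\eqref{eq:AH}, while $x \leq y$ gives $\sft(x-y) \leq 0$, hence~\eqref{eq:M}. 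With this equivalence and $P = \Delta$ in hand, Theorem~\ref{thm:RepresentationN} produces the minimax formula, and its final assertion supplies the restriction of the maximum to $\clo{\ext{\Delta}}$.

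Finally, for the restriction of the minimum to those $y$ with $f(y) = 0$, I would use the substitution $y \mapsto y' := y - f(y)\unit$, which satisfies $f(y') = 0$ by~\eqref{eq:AH} and is the identity on the level set $\{y \mid f(y) = 0\}$. Since every $p \in \Delta$ has $\<p,\unit> = 1$, the inner maximum is invariant under this substitution, namely $\max_{p \in \Delta}\{\<p,x-y'> + f(y')\} = \max_{p \in \Delta}\{\<p,x-y> + f(y)\}$, so restricting the outer minimum to $\{f = 0\}$ does not change its value. The one step demanding real care is the identification $P = \Delta$, where the order-unit and Archimedean axioms must be invoked precisely, both to guarantee that $\sft$ is finite and continuous and to recover the sign and normalization conditions defining $\Delta$; everything else is routine.
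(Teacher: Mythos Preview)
Your proposal is correct and follows essentially the same route as the paper: specialize Theorem~\ref{thm:RepresentationN} to $q=\sft$, identify $P$ with $\Delta$ via the two inclusions you describe, and handle the restriction to $\{f=0\}$ by the substitution $y\mapsto y-f(y)\unit$. The only difference is that you spell out the continuity of $\sft$ and the equivalence \eqref{eq:M}+\eqref{eq:AH}$\iff$\eqref{eq:NT} explicitly, whereas the paper defers these to Section~\ref{sec:PreliminaryResults}.
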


\begin{proof}
  Let $f$ be a monotone and additively homogeneous real function on $V$.
  As exposed in Section~\ref{sec:PreliminaryResults}, this is equivalent to the function $f$ being nonexpansive with respect to the weak Minkowski norm $\sft$.
  Let $P$ be defined as in Theorem~\ref{thm:RepresentationN} and let us show that $P=\Delta$.
  If $p \in P$, then, for all $x \in V, x \geq 0$, we have $\sft(-x) \leq 0$, so that $\<p,-x> \leq 0$, hence $\<p,x> \geq 0$.
  Moreover, $\sft(\unit) \leq 1$ and $\sft(-\unit) \leq -1$, so that $\<p,\unit> \leq 1$ and $\<p,-\unit> \leq -1$, which shows that $\<p,\unit>=1$.
  This shows that $P\subset \Delta$.
  Conversely, if $p \in \Delta$, then for all $x \in V$ such that $\sft(x) < \lambda$, we have $x \leq \lambda \unit$, so that $\<p,x> = \<p,x-\lambda \unit> + \lambda \<p,\unit> \leq \lambda$.
  This implies that $\<p,x> \leq \sft(x)$ for all $x \in V$, hence $p \in P$.
  Applying  Theorem~\ref{thm:RepresentationN}, we get the first equality in the corollary.
  Now, remark  that $\<p,x-y> + f(y) = \<p,x-z>$ with $z = y-f(y)\unit$, since $\<p,\unit>=1$ and $f(z)=f(y)-f(y)=0$.
  Then, a change of variable leads to
  \[
    f(x)= \min_{\substack{y \in V \\ f(y)=0}} \; \max_{p \in \Delta} \; \<p,x-y> \enspace .
  \]
  The remaining part of the corollary follows from Theorem~\ref{thm:RepresentationN} and the converse is straightforward.
\end{proof}

\begin{example}
  If $V = \R^n$ and $f$ is monotone and additively homogeneous, then, 
  as recalled in Section~\ref{sec:PreliminaryResults}, it is nonexpansive in the weak Minkowski norm $\sft$.
  Then, the representation result of Rubinov and Singer~\cite[Th.~5.3]{RS01b},
  which shows that
  \[
    [f(x)]_i = \min_{y\in \R^n} \max_{1\leq j\leq n} \{x_j -y_j + [f(y)]_i\} \enspace ,
  \]
  is a special case of Corollary~\ref{coro:RepresentationMAH}.
\end{example}

\subsection{Representation of positively homogeneous nonexpansive functions}

We now consider nonexpansive functions that are positively homogeneous.
The following theorem characterizes these functions as minimax of nonexpansive linear forms. 

\begin{theorem}
  \label{thm:RepresentationHN}
  Let $V$ be a real TVS and let $q:V \to \R$ be a weak Minkowski norm, continuous with respect to the topology of $V$.
  Denote by $P := \{ p \in V^* \mid \forall x \in V, \enspace \<p,x> \leq q(x) \}$.
  A function $f:V \to \R$ is positively homogeneous and nonexpansive with respect to $q$ if, and only if,
  \begin{align}
    \label{eq:RepresentationHN}
    f(x) = \min_{y \in V} \; \max_{p \in P_y} \; \<p,x> \enspace ,
  \end{align}
  where $P_y := \{ p \in P \mid \<p,y> \leq f(y) \}$.
  Moreover, the value of the expression~\eqref{eq:RepresentationHN} does not change if the maximum is restricted to the points $p \in \clo{\ext{P_y}}$.
\end{theorem}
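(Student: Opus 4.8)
The plan is to prove both implications, building on the additive representation of Theorem~\ref{thm:RepresentationN}; the forward (necessity) direction is the substantial one. For the sufficiency direction, suppose $f$ is real-valued and satisfies~\eqref{eq:RepresentationHN}. I would first observe that real-valuedness forces every $P_y$ to be nonempty: if some $P_{y_0}=\emptyset$, then $\max_{p\in P_{y_0}}\<p,x>=-\infty$ and the outer minimum would be $-\infty$. Positive homogeneity is then immediate, because the sets $P_y$ depend on $y$ (and $f(y)$) but not on $x$: for $\lambda>0$ one pulls $\lambda$ through the support function and through the outer minimum, $f(\lambda x)=\min_y\max_{p\in P_y}\<p,\lambda x>=\lambda\min_y\max_{p\in P_y}\<p,x>=\lambda f(x)$, while $\lambda=0$ gives $f(0)=0$. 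Nonexpansiveness follows from subadditivity of support functions together with $P_y\subseteq P$ and Lemma~\ref{lem:RepresentationWeakNorm}: for each $y$ one has $\max_{p\in P_y}\<p,x>\leq\max_{p\in P_y}\<p,x'>+\max_{p\in P}\<p,x-x'>=\max_{p\in P_y}\<p,x'>+q(x-x')$; choosing $y$ to almost minimize the $x'$-term and letting the slack tend to $0$ yields $f(x)-f(x')\leq q(x-x')$.

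For the necessity direction, assume $f$ is positively homogeneous (so $f(0)=0$) and nonexpansive. I would record two structural facts. First, each $P_y$ is nonempty: applying nonexpansiveness to the pair $(0,y)$ gives $-f(y)=f(0)-f(y)\leq q(-y)=\max_{p\in P}\<p,-y>=-\min_{p\in P}\<p,y>$, so $\min_{p\in P}\<p,y>\leq f(y)$, and since $P$ is weak* compact the minimizing $p$ lies in $P_y$. Second, each $P_y$ is weak* compact and convex, being the intersection of the compact convex set $P$ with the weak*-closed half-space $\{p:\<p,y>\leq f(y)\}$. The easy inequality $\min_y\max_{p\in P_y}\<p,x>\leq f(x)$ then comes from the choice $y=x$, since every $p\in P_x$ satisfies $\<p,x>\leq f(x)$ by definition of $P_x$.

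The core of the proof is the reverse inequality $f(x)\leq\max_{p\in P_y}\<p,x>$ for every $y$, and this is where I expect the main obstacle. The idea is to exploit positive homogeneity by substituting the scaled points $z=ty$, $t\geq0$, into Theorem~\ref{thm:RepresentationN}: using $f(ty)=tf(y)$ one gets $f(x)\leq\max_{p\in P}\{\<p,x>-t(\<p,y>-f(y))\}$ for all $t\geq0$, hence $f(x)\leq\inf_{t\geq0}\max_{p\in P}\{\<p,x>-t(\<p,y>-f(y))\}$. The delicate step is to exchange this infimum with the maximum. Since the expression is affine in $t$ and weak*-continuous and linear in $p$, and $P$ is weak* compact and convex, Sion's minimax theorem justifies the swap, turning the right-hand side into $\max_{p\in P}\inf_{t\geq0}\{\<p,x>-t(\<p,y>-f(y))\}$; the inner infimum equals $\<p,x>$ when $p\in P_y$ and $-\infty$ otherwise, so the whole quantity collapses to $\max_{p\in P_y}\<p,x>$, which is finite by the two structural facts. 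I would be careful about the $-\infty$ values appearing in the maximin and about the non-compactness of $[0,\infty)$: either invoke Sion directly (only the $p$-set needs to be compact) or first restrict $t$ to a compact interval $[0,T]$ and pass monotonically to the limit $T\to\infty$. Combining the two inequalities gives~\eqref{eq:RepresentationHN}, with the outer minimum attained at $y=x$.

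Finally, for the ``moreover'' clause, I would note that each $P_y$ is weak* compact and convex, so the Krein--Milman argument of Lemma~\ref{lem:RepresentationWeakNorm2} applies verbatim with $P_y$ in place of $P$, giving $\max_{p\in P_y}\<p,x>=\max_{p\in\clo{\ext{P_y}}}\<p,x>$ for every $y$, and hence the restriction to $\clo{\ext{P_y}}$ does not change the value of~\eqref{eq:RepresentationHN}.
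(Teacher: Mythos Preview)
Your proof is correct and takes a genuinely different route from the paper for the main (necessity) direction. Both arguments start from the same observation, namely that positive homogeneity allows one to substitute $\lambda y$ for $y$ in the inf-convolution identity $f(x)=\min_y\{f(y)+q(x-y)\}$, yielding
\[
  f(x)\;\leq\;\inf_{\lambda>0}\,\max_{p\in P}\bigl\{\<p,x>-\lambda(\<p,y>-f(y))\bigr\}\quad\text{for every }y\in V.
\]
From here the two proofs diverge. The paper regards the right-hand side as a function $g_y$ of $x$, shows directly that $g_y$ is a continuous weak Minkowski norm (using the perspective construction to get convexity and the bounds $f\leq g_y\leq q$ to get continuity), applies Lemma~\ref{lem:RepresentationWeakNorm} to write $g_y$ as the support function of some weak*-compact convex set $Q_y$, and then identifies $Q_y=P_y$ by a short two-inclusion computation. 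You instead keep $x$ fixed and swap the $\inf_\lambda$ and $\max_p$ via Sion's minimax theorem (or, more carefully, via the compact truncation $\lambda\in[0,T]$ followed by a monotone limit), after which the inner infimum collapses to the indicator of $P_y$.

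The two approaches compute the same quantity, so the difference is one of technique rather than substance. The paper's argument is entirely self-contained given Lemma~\ref{lem:RepresentationWeakNorm}, and it yields the ``moreover'' clause for free (since $g_y$ is a weak Minkowski norm, Lemma~\ref{lem:RepresentationWeakNorm2} applies directly). Your argument is conceptually more transparent---it makes visible that the representation is a saddle-point statement---but it imports Sion's theorem, and you must handle separately the non-compactness of $[0,\infty)$ (which you correctly flag) and the ``moreover'' clause (which you correctly recover by noting that the Krein--Milman argument of Lemma~\ref{lem:RepresentationWeakNorm2} goes through for any weak*-compact convex set, in particular $P_y$). Your treatment of sufficiency is also more detailed than the paper's, which dismisses it as straightforward.
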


\begin{proof}
  The sufficiency of the condition is straightforward, so we only prove that it is necessary.
  Let $f$ be a real function positively homogeneous and nonexpansive with respect to $q$.
  As a direct consequence of nonexpansiveness, we get that $f(x) = \min_{y \in V} f(y)+q(x-y)$.
  By a change of variable, we have $f(x) = \min_{y \in V} \inf_{\lambda > 0} \lambda f(y)+q(x-\lambda y)$, since $f$ is also positively homogeneous.
  There, the minimum in $y$ is attained at all $\mu x$ with $\mu > 0$.
  Given $y \in V$, let $g_y$ be the function defined on $V$ by $g_y:x \mapsto \inf_{\lambda > 0} \lambda f(y)+q(x-\lambda y)$, so that $f(x)= \min_{y \in V} g_y(x)$.
  We next show that $g_y$ is a continuous weak Minkowski norm.

  Firstly, $g_y$ is bounded above by $q$ (take $\lambda \to 0$) and below by $f$ (since $f = \min_{y} g_y$).
  In particular, $g_y$ is bounded above on a neighborhood of each point of $V$.
  Secondly, using the positive homogeneity of $f$ and $q$, we check that $g_y$ also shares this property.
  Thirdly, $g_y$ is convex.
  Indeed, the function $x \mapsto f(y)+q(x-y)$ is convex because so is $q$.
  Its perspective function, defined on $\R \times V$ by $(\lambda,x) \mapsto \lambda f(y)+q(x-\lambda y)$ if $\lambda>0$ and $+\infty$ otherwise, is also convex~\cite[Prop.~8.23]{BC11}.
  The conclusion follows from the fact that $g_y$ is the marginal function with respect to the first variable of this former function.

  We have shown that $g_y$ is a positively homogeneous convex function, finite everywhere and bounded above on a neighborhood of each point of $V$.
  Hence, it is also continuous on $V$~\cite[Th.~5.43]{AB06} and we deduce from Lemma~\ref{lem:RepresentationWeakNorm} that it is the support function of the weak* compact convex set $Q_y := \{ p \in V^* \mid \forall x \in V, \enspace \<p,x> \leq g_y(x) \}$.
  To conclude, it remains to show that this set is $P_y$.

  Let $p \in Q_y$.
  Then, for every $x \in V$ and every $\lambda > 0$, we have $\<p,x> \leq \lambda f(y) + q(x-\lambda y)$.
  Taking $\lambda \to 0$ we deduce that $p \in P$, and taking $x=y$ and $\lambda = 1$ we deduce that $\<p,y> \leq f(y)$.
  Hence, $p \in P_y$ which shows that $Q_y \subset P_y$.

  Consider now $p \in P_y$.
  For every $x \in V$ and every $\lambda > 0$ we have
  \begin{equation*}
    \<p,x> = \<p,x> - \<p,\lambda y> + \<p,\lambda y> = \<p,x-\lambda y> + \lambda \<p,y> \leq q(x-\lambda y) + \lambda f(y) \enspace .
  \end{equation*}
  Taking the infimum for all $\lambda > 0$ in the right-hand side of the last inequality, we get that $\<p,x> \leq g_y(x)$.
  Hence $p \in Q_y$ which shows that $Q_y \subset P_y$ and consequently that $Q_y = P_y$.
\end{proof}

The following is an immediate corollary. 
\begin{corollary}
  \label{coro:RepresentationMAH2}
  Let $V$ be a vector space with an order unit $\unit$, and the topology defined by the norm~\eqref{eq:SeminormUnit}. 
  Then, a function $f:V \to \R$ is monotone, additively homogeneous, and positively homogeneous if, and only if,
  \begin{align}
    f(x) = \min_{y \in V} \max_{p \in \Delta_y} \; \<p,x> \enspace ,
    \label{eq:RepresentationHN2}
  \end{align}
  where 
  \[
    \Delta_y := \{p \in V^* \mid \<p,y> \leq f(y), \enspace \<p,\unit>=1, \enspace \<p,x> \geq 0, \enspace \forall x \in V, x \geq 0 \} \enspace .
  \]
  Moreover, the value of the expression in~\eqref{eq:RepresentationHN2} does not change if the minimum is restricted to those $y \in V$ such that $f(y)=0$, or if the maximum is restricted to the points $p \in \clo{\ext{\Delta_y}}$.
\end{corollary}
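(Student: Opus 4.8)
The plan is to obtain this as a specialization of Theorem~\ref{thm:RepresentationHN} to the weak Minkowski norm $q = \sft$ defined in~\eqref{eq:Top}, following the pattern already used in Corollary~\ref{coro:RepresentationMAH}. First I would invoke the equivalence recalled in Section~\ref{sec:PreliminaryResults} (the Gunawardena--Keane characterization, generalized to a vector space with an order unit): a real function $f$ on $V$ is monotone and additively homogeneous if and only if it is nonexpansive with respect to $\sft$. Hence $f$ is monotone, additively homogeneous, \emph{and} positively homogeneous precisely when it is positively homogeneous and nonexpansive with respect to $\sft$, which is exactly the hypothesis of Theorem~\ref{thm:RepresentationHN} for $q = \sft$.

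Next I would identify the relevant sets. The computation carried out in the proof of Corollary~\ref{coro:RepresentationMAH} shows that, for $q = \sft$, the set $P = \{p \in V^* \mid \<p,x> \leq \sft(x) \text{ for all } x\}$ coincides with $\Delta = \{p \in V^* \mid \<p,\unit> = 1, \ \<p,x> \geq 0 \text{ for } x \geq 0\}$. Consequently, for each $y \in V$ the set $P_y = \{p \in P \mid \<p,y> \leq f(y)\}$ of Theorem~\ref{thm:RepresentationHN} is exactly $\Delta_y$. Substituting $P = \Delta$ and $P_y = \Delta_y$ into~\eqref{eq:RepresentationHN} yields the representation~\eqref{eq:RepresentationHN2}, and the clause allowing the maximum to be restricted to $\clo{\ext{\Delta_y}}$ is then a direct transcription of the corresponding statement in Theorem~\ref{thm:RepresentationHN}.

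It remains to justify that the outer minimum may be restricted to those $y$ with $f(y) = 0$, and here I would reuse the change of variable from Corollary~\ref{coro:RepresentationMAH}. Given $y \in V$, set $z = y - f(y)\unit$; additive homogeneity gives $f(z) = 0$. Since every $p \in \Delta$ satisfies $\<p,\unit> = 1$, one has $\<p,z> = \<p,y> - f(y)$, so the inequalities $\<p,y> \leq f(y)$ and $\<p,z> \leq 0 = f(z)$ are equivalent; that is, $\Delta_y = \Delta_z$. Thus each inner maximum $\max_{p \in \Delta_y} \<p,x>$ is unchanged upon replacing $y$ by $z$, which lies in the zero set of $f$, and the minimum over all of $V$ therefore agrees with the minimum over $\{y \mid f(y) = 0\}$.

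I do not anticipate any genuine obstacle: once the identifications $P = \Delta$ and $P_y = \Delta_y$ are in place, the necessity of~\eqref{eq:RepresentationHN2} is an immediate instance of Theorem~\ref{thm:RepresentationHN}, and the only step requiring a (routine) verification is the change of variable behind the $f(y) = 0$ reduction. For the converse, I would simply remark that if $f$ is given by~\eqref{eq:RepresentationHN2}, then, since $\Delta_y = P_y$, the sufficiency part of Theorem~\ref{thm:RepresentationHN} shows that $f$ is positively homogeneous and nonexpansive with respect to $\sft$, whence, by the equivalence above, $f$ is also monotone and additively homogeneous.
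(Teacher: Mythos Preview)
Your proposal is correct and matches the paper's intent: the paper simply declares this an ``immediate corollary'' of Theorem~\ref{thm:RepresentationHN} without giving any argument, and your write-up supplies precisely the details one would expect---the identification $P=\Delta$ (hence $P_y=\Delta_y$) from the proof of Corollary~\ref{coro:RepresentationMAH}, the direct application of Theorem~\ref{thm:RepresentationHN}, and the change of variable $z=y-f(y)\unit$ for the $f(y)=0$ reduction.
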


\section{Applications}

We now point out some applications of the present representation theorem
to nonconvex risk measure and to the representation of recursive games. 

\subsection{Representation of nonconvex risk measures}

Let $(\Omega, \mathcal{F}, \bbP)$ be a probability space.
Denote by $L^\infty(\bbP)$ the space of equivalence classes of a.s.\ bounded real-valued random variables, equipped with the usual $L^\infty$ norm,
and the usual partial order, i.e., $X \leq Y$ if any representative of $Y-X$ is a random
variable almost surely nonnegative.
This is a special case of AM-space with unit, the unit $\unit$ being the random variable a.s.\ equal to $1$.
Its topological dual space is the space of finitely additive measures of bounded variation which are absolutely continuous with respect to $\bbP$, denoted by $\ba(\bbP)$~\cite[Th.~IV.8.16]{DS88}.
Following a standard notation, we will write $\E_p[X]$ instead of $\<p,X>$ for $X \in L^\infty(\bbP)$ and $p \in \ba(\bbP)$.
We denote by $\ba^+(\bbP)$ the set of positive bounded finitely additive measures and by $\Delta(\bbP) := \{ p \in \ba^+(\bbP) \mid \E_p[\unit]=1 \}$ the set of finitely additive probability measures.

A {\em risk measure} is a function $\mu: L^\infty(\bbP) \to \R$ that satisfies the following conditions, for every $X,Y \in L^\infty(\bbP)$:
\begin{itemize}
  \item $X \leq Y \implies \mu(X) \geq \mu(Y)$ ;
  \item $\mu(X+\lambda \unit) = \mu(X)-\lambda, \enspace \forall \lambda \in \R$ ;
\end{itemize}
see~\cite{FS11}.
This is equivalent to the function $\rho:X \mapsto \mu(-X)$ being monotone and additively homogeneous.

We say that a risk measure $\mu$ is {\em coherent} if it is also convex and positively homogeneous, see~\cite{ADEH99,Del02}.
It is known that a coherent risk measure can be represented in the form
\[ \mu(X) = \sup_{p \in \mathcal Q_\ba} \E_p[-X] \]
where $\mathcal Q_\ba$ is a convex subset of the space of finitely additive probability measures on $\Omega$, closed with respect to the $\sigma(\ba(\bbP),L^\infty(\bbP))$-topology.
As a direct application of Corollary~\ref{coro:RepresentationMAH}, we obtain a similar representation for general nonconvex risk measures.
\begin{corollary}
  \label{coro:RepresentationRiskMeasure}
  Let $\mu$ be a risk measure on $L^\infty(\bbP)$.
  Then,
  \[
    \mu(X) = \min_{ \substack{Y \in L^\infty(\bbP) \\ \mu(Y)=0} } \; \max_{p \in \Delta(\bbP)} \; \E_p[Y-X] \enspace .
  \]
\end{corollary}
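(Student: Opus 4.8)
The plan is to reduce the statement to Corollary~\ref{coro:RepresentationMAH} applied to the function $\rho : X \mapsto \mu(-X)$. As already observed in the text preceding the corollary, the two defining axioms of a risk measure are exactly equivalent to $\rho$ being monotone and additively homogeneous. Since $L^\infty(\bbP)$ is an AM-space with unit, the unit $\unit$ being the a.s.\ constant random variable equal to $1$, equipped with the topology of the norm~\eqref{eq:SeminormUnit}, Corollary~\ref{coro:RepresentationMAH} applies verbatim to $\rho$. The remaining work is then to translate the abstract conclusion into the probabilistic notation of the statement.

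First I would identify the set $\Delta$ appearing in Corollary~\ref{coro:RepresentationMAH} with the set $\Delta(\bbP)$ of finitely additive probability measures. By definition $\Delta$ consists of the linear forms $p \in V^*$ with $\langle p,\unit\rangle = 1$ that are nonnegative on the positive cone. Any such $p$ is automatically continuous: if $\|X\|_{e_V} \leq 1$ then $-\unit \leq X \leq \unit$, whence $-1 \leq \langle p,X\rangle \leq 1$ by positivity and normalization, so $p \in V'$. Since the topological dual of $L^\infty(\bbP)$ is $\ba(\bbP)$ by~\cite[Th.~IV.8.16]{DS88}, the elements of $\Delta$ are precisely the positive, normalized, finitely additive measures absolutely continuous with respect to $\bbP$; that is, $\Delta = \Delta(\bbP)$.

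Then I would apply Corollary~\ref{coro:RepresentationMAH} to $\rho$, using the stated refinement that restricts the minimum to those $Y$ with $\rho(Y)=0$, to obtain $\rho(Z) = \min_{\{Y : \rho(Y)=0\}} \max_{p \in \Delta(\bbP)} \E_p[Z-Y]$ for all $Z \in L^\infty(\bbP)$. Evaluating at $Z=-X$ gives $\mu(X) = \rho(-X) = \min_{\{Y : \rho(Y)=0\}} \max_{p \in \Delta(\bbP)} \E_p[-X-Y]$. Finally I would perform the change of variable $Y \mapsto -Y$: since $\rho(Y) = \mu(-Y)$, the constraint $\rho(Y)=0$ becomes $\mu(-Y)=0$, and the integrand $\E_p[-X-Y]$ becomes $\E_p[(-Y)-X]$, so that after renaming the variable back to $Y$ one recovers exactly the announced formula $\mu(X) = \min_{\{Y : \mu(Y)=0\}} \max_{p \in \Delta(\bbP)} \E_p[Y-X]$.

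The argument is essentially bookkeeping once Corollary~\ref{coro:RepresentationMAH} is in hand. The one point that genuinely requires care is the identification $\Delta = \Delta(\bbP)$, and specifically the fact that the abstractly defined dual set $\Delta \subset V^*$ actually lands in the concrete dual $\ba(\bbP)$; this is precisely the automatic continuity of positive normalized functionals on an AM-space with unit noted above, combined with the Dunford--Schwartz description of $(L^\infty)'$. The sign conventions in passing between $\mu$ and $\rho$ are the other place where a slip could occur, so I would track them explicitly through the two substitutions $X \mapsto -X$ and $Y \mapsto -Y$.
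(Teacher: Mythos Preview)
Your proof is correct and follows exactly the route the paper intends: the paper states the corollary as ``a direct application of Corollary~\ref{coro:RepresentationMAH}'' without further argument, and your reduction via $\rho(X)=\mu(-X)$, the identification $\Delta=\Delta(\bbP)$, and the change of variable $Y\mapsto -Y$ is precisely the bookkeeping that application requires. The continuity of the elements of $\Delta$ is in fact already secured in the paper (Lemma~\ref{lem:RepresentationWeakNorm} shows $P\subset V'$ and the proof of Corollary~\ref{coro:RepresentationMAH} shows $P=\Delta$), but your direct verification is equally valid.
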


The following corollary characterizes the nonconvex risk measures that are positively homogeneous.
It is a direct application of Corollary~\ref{coro:RepresentationMAH2}.
\begin{corollary}
  Let $\mu$ be a positively homogeneous risk measure.
  Then,
  \[
    \mu(X) = \min_{ \substack{Y \in L^\infty(\bbP) \\ \mu(Y)=0} } \; \max_{ \substack{p \in \Delta(\bbP) \\ \E_p[Y] \geq 0} } \; \E_p[-X] \enspace.
  \]
\end{corollary}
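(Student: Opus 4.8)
The plan is to mirror the derivation of Corollary~\ref{coro:RepresentationRiskMeasure}, reducing to the positively homogeneous representation through the substitution $\rho : X \mapsto \mu(-X)$. As already recorded in this subsection, $\mu$ being a risk measure is equivalent to $\rho$ being monotone and additively homogeneous on the AM-space with unit $L^\infty(\bbP)$. First I would verify that positive homogeneity transfers to $\rho$: for $\lambda \geq 0$ one has $\rho(\lambda X) = \mu(-\lambda X) = \lambda\,\mu(-X) = \lambda\,\rho(X)$. Hence $\rho$ satisfies all three hypotheses \eqref{eq:M}, \eqref{eq:AH}, \eqref{eq:H} of Corollary~\ref{coro:RepresentationMAH2}.

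Next I would apply Corollary~\ref{coro:RepresentationMAH2} to $\rho$, invoking the supplementary statement that the outer minimum may be taken only over those $Y$ with $\rho(Y)=0$. On this restricted index set the defining constraint $\<p,Y> \leq \rho(Y)$ of $\Delta_Y$ collapses to $\<p,Y> \leq 0$. As in the proof of Corollary~\ref{coro:RepresentationMAH}, the forms $p$ occurring here are dominated by the continuous weak Minkowski norm $\sft$, hence continuous by Lemma~\ref{lem:RepresentationWeakNorm}, so they lie in the topological dual $\ba(\bbP)$; writing $\<p,\cdot> = \E_p[\cdot]$, the conditions $\<p,\unit>=1$ and $\<p,x>\geq 0$ for $x\geq 0$ identify them with $\Delta(\bbP)$. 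This yields
\[
  \rho(X) = \min_{\substack{Y \in L^\infty(\bbP) \\ \rho(Y)=0}} \; \max_{\substack{p \in \Delta(\bbP) \\ \E_p[Y] \leq 0}} \; \E_p[X] \enspace .
\]

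Finally I would carry out the sign bookkeeping needed to pass from $\rho$ back to $\mu$. Evaluating the display above at $-X$ and substituting $Y' := -Y$ turns $\rho(Y)=0$ into $\mu(Y')=0$, turns the objective $\E_p[X]$ into $\E_p[-X]$, and turns the constraint $\E_p[Y]\leq 0$ into $\E_p[Y']\geq 0$, giving precisely the claimed formula. There is no substantive obstacle here: the only point demanding care is the orientation of the two inequalities across the successive sign changes, which is exactly where the transition from the monotone picture ($\rho$) to the risk-measure picture ($\mu$) is effected.
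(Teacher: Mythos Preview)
Your argument is correct and is precisely the ``direct application of Corollary~\ref{coro:RepresentationMAH2}'' that the paper invokes without further detail: you pass to $\rho(X)=\mu(-X)$, apply the representation restricted to $\rho(Y)=0$, and then undo the sign via $Y'=-Y$. The only addition beyond the paper's one-line justification is your explicit check that the linear forms lie in $\ba(\bbP)$, which is indeed handled by Lemma~\ref{lem:RepresentationWeakNorm}.
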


\subsection{Representation of payment-free Shapley operators}

Here, we consider the vector space $V=\R^n$, $\<\cdot,\cdot>$ denoting its usual scalar product.
We call {\em payment-free} Shapley operator, an operator over $\R^n$ that is monotone, additively and positively homogeneous. This terminology is justified by the following corollary, which shows that all operators of this kind precisely arise from zero-sum games
in which the function $(a,b)\mapsto r_i^{ab}$ representing the payment made
at every stage is identically zero. Such games, in which the payment only occurs
the ``last day'', have been studied after Everett under the name of {\em recursive games}~\cite{Eve57}. The operators of these games also arise as recession functions of general Shapley operators. They allow one to study mean payoff
and ergodicity problems, see~\cite{RS01a} and~\cite{AGH15}. 
\begin{corollary}
  \label{coro:PaymentFreeOperator}
  Let $F:\R^n \to \R^n$ be a payment-free Shapley operator, i.e.\ a monotone, additively and positively homogeneous operator $\R^n \to \R^n$.
  Then, $F$ can be represented as
  \begin{equation}
    \label{eq:PaymentFreeOperator}
    F_i(x) = \min_{a \in A_i} \; \max_{b \in B_{i,a}} \; \sum_{j=1}^n P_{i j}^{a b} x_j, \quad \forall x \in \R^n, \quad \forall i \in \{1,\dots,n\}
  \end{equation}
  where: $A_i = \{a \in \R^n \mid F_i(a)=0 \}$; for every $i \in \{1,\dots,n\}$ and every $a \in A_i$, $B_{i,a}$ is a finite subset of $\R^n$; for every $a \in A_i$ and every $b \in B_{i,a}$, $P_i^{a b} = (P_{i j}^{a b})_{1 \leq j \leq n}$ is a stochastic vector with at most two positive coordinates.
\end{corollary}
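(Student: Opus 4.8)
The plan is to specialize Corollary~\ref{coro:RepresentationMAH2} to each coordinate map $F_i:\R^n\to\R$ and then to analyze the geometry of the resulting dual sets. Each $F_i$ is monotone, additively homogeneous and positively homogeneous, so the corollary applies. Since $V=\R^n$ with the standard scalar product, I would identify $V^*=\R^n$; the constraints $\<p,\unit>=1$ and $\<p,x>\geq 0$ for all $x\geq 0$ then read $\sum_j p_j=1$ and $p_j\geq 0$, so the base set $\Delta$ is exactly the probability simplex, i.e.\ the set of stochastic vectors. Restricting the outer minimum to those $y$ with $F_i(y)=0$, which the corollary permits, and renaming this variable $a$, one obtains
\[
  F_i(x)=\min_{a\in A_i}\;\max_{p\in\Delta_a}\;\<p,x>\enspace, \qquad A_i=\{a\in\R^n \mid F_i(a)=0\}\enspace,
\]
where $\Delta_a=\{p\in\Delta \mid \<p,a>\leq 0\}$.

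Next I would observe that $\Delta_a$ is a polytope, being the intersection of the simplex $\Delta$ (a bounded polyhedron) with the closed half-space $\{p \mid \<p,a>\leq 0\}$. Consequently $\ext{\Delta_a}$ is finite, hence weak*-closed, so that $\clo{\ext{\Delta_a}}=\ext{\Delta_a}$ and, by the last assertion of Corollary~\ref{coro:RepresentationMAH2}, the inner maximum may be taken over the finite set $B_{i,a}:=\ext{\Delta_a}\subset\Delta$. Identifying each $b\in B_{i,a}$ with the corresponding stochastic vector $P_i^{ab}:=b$ then yields the representation~\eqref{eq:PaymentFreeOperator}, leaving only the ``at most two positive coordinates'' claim to be established.

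The crux is to show that every vertex of $\Delta_a$ has at most two positive entries, for which I would invoke the rank characterization of vertices: a point $p$ is a vertex of $\{p \mid \sum_j p_j=1,\ -p_j\leq 0,\ \<p,a>\leq 0\}$ in $\R^n$ if and only if the normals of the constraints active at $p$ span $\R^n$. The equality $\sum_j p_j=1$, with normal $\unit$, is always active and contributes one dimension, so $n-1$ further independent active inequality normals are required. If $\<p,a>\leq 0$ is inactive, these must be $n-1$ of the sign-constraint normals, forcing $n-1$ coordinates to vanish and leaving at most one positive coordinate; if instead $\<p,a>\leq 0$ is active, its normal $a$ accounts for one dimension, so only $n-2$ vanishing-coordinate constraints are needed, leaving at most two positive coordinates. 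In either case a vertex has at most two positive entries, which is precisely the required condition on $P_i^{ab}$.

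I expect this last step to be the main obstacle, namely translating the combinatorial vertex count cleanly into the ``at most two positive coordinates'' property; it does reduce to the standard active-constraint criterion once $\Delta_a$ is recognized as a simplex truncated by a single linear inequality. A minor point I would also verify is that $\Delta_a\neq\varnothing$ for $a\in A_i$: if all $a_j>0$, then $a\geq(\min_j a_j)\unit$ with $\min_j a_j>0$, and monotonicity together with $F_i(0)=0$ and additive homogeneity would give $F_i(a)\geq\min_j a_j>0$, contradicting $F_i(a)=0$; hence $\min_j a_j\leq 0$ and the corresponding vertex $e_{j_0}$ of $\Delta$ already lies in $\Delta_a$.
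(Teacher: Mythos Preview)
Your proof is correct and follows the same approach as the paper's: specialize Corollary~\ref{coro:RepresentationMAH2} to each $F_i$, identify $\Delta$ with the standard simplex, and take $B_{i,a}=\ext\Delta_a$. The only cosmetic difference is in the last step, where the paper obtains the ``at most two positive coordinates'' conclusion by citing a standard lemma (every extreme point of a polytope intersected with a half-space is either a vertex of the polytope or a convex combination of two such vertices), whereas you derive the same fact directly via the active-constraint rank criterion.
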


\begin{proof}
  Any operator of the form~\eqref{eq:PaymentFreeOperator} is a payment-free Shapley operator.
  Conversely, let $F$ be a payment-free Shapley operator.
  Each coordinate function is monotone, additively and positively homogeneous.
  Then, it follows from Corollary~\ref{coro:RepresentationMAH2} that, for every $i \in \{1,\dots,n\}$ and every $x \in \R^n$,
  \begin{equation}
    \label{eq:PaymentFreeOperatorproof}
    F_i(x) = \min_{ \substack{y \in \R^n \\ F_i(y)=0} } \; \max_{ \substack{p \in \Delta_n \\ \<p,y> \leq 0} } \; \<p,x> \enspace .
  \end{equation}
  where $\Delta_n$ is the standard simplex of $\R^n$.

  Let $A_i = \{ a \in \R^n \mid F_i(a)=0 \}$; for $a \in A_i$, let $B_{i,a}$ be the set of extreme points of the polytope $\{p \in \Delta_n \mid \<p,a> \leq 0\}$ (where the maximum in~\eqref{eq:PaymentFreeOperatorproof} is attained); and for $a \in A_i$ and $b \in B_{i,a}$, let $P_i^{a b} = b$.
  Rewriting equation~\eqref{eq:PaymentFreeOperatorproof} with those notations we get exactly~\eqref{eq:PaymentFreeOperator}.

  Finally, a standard result of convex geometry shows that every extreme point of the intersection
  of a polytope with a half-space is either an extreme point of the polytope, or the
  convex combination of two extreme points of this polytope
  (see for instance Lemma~3 of~\cite{FP96}).
  It follows that every element in $B_{i,a}$ is either an extreme point of $\Delta_n$, either a convex combination of two extreme points of $\Delta_n$.
\end{proof}

\subsection{Approximation of Shapley operators}

We use the latter result to approximate payment-free Shapley operators over $\R^n$ by minimax maps where the $\min$ and $\max$ operators are taken over finite sets.
Such maps play an important role algorithmically, in max-plus finite element method~\cite{AGL08},
and more generally in idempotent methods~\cite{McE11,McEP15}. 

Let $q: \R^n \to \R$ be a weak Minkowski norm.
We say that a subset $A \subset \R^n$ is an $\varepsilon$-net of the set $K \subset \R^n$ with respect to (the symmetrization of) $q$ if
\[
  \inf_{a \in A} \max \{ q(x-a),q(a-x) \} < \varepsilon \enspace , \quad \forall x \in K \enspace .
\]
Note that here, since the dimension is finite, $q$ is continuous, and then it is always possible to find a finite $\varepsilon$-net of a compact set with respect to $q$.
\begin{proposition}
  Let $q: \R^n \to \R$ be a weak Minkowski norm and let $f: \R^n \to \R$ be a map nonexpansive with respect to $q$.
  Then, for every compact set $K \subset \R^n$, and for every finite $\varepsilon$-net $(y_\ell)_{1 \leq \ell \leq m}$ of $K$ with respect to $q$, the function
  \[
    g(x) = \min_{1 \leq \ell \leq m} \{ f(y_\ell) + q(x-y_\ell) \}
  \]
  is such that
  \[
    f(x) \leq g(x) \leq f(x) + 2 \varepsilon \enspace, \quad \forall x \in K \enspace .
  \]
\end{proposition}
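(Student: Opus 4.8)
The plan is to verify the two inequalities separately. The lower bound follows from nonexpansiveness alone and holds for every $x \in \R^n$, while the upper bound is the only place where the $\varepsilon$-net hypothesis enters; I would organize the proof around the observation that the \emph{symmetrization} in the definition of the net is exactly what makes the upper bound go through.

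For the lower bound $f(x) \leq g(x)$, I would simply invoke that $f$ is nonexpansive with respect to $q$: for each index $\ell$ we have $f(x) - f(y_\ell) \leq q(x - y_\ell)$, hence $f(x) \leq f(y_\ell) + q(x - y_\ell)$. Taking the minimum over $\ell$ on the right-hand side gives $f(x) \leq g(x)$. This estimate is valid for every $x \in \R^n$, independently of $K$ and of the chosen net.

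For the upper bound, I would fix $x \in K$ and use the $\varepsilon$-net property to select an index $\ell$ with $\max \{ q(x - y_\ell), q(y_\ell - x) \} < \varepsilon$. By the very definition of $g$ we then have $g(x) \leq f(y_\ell) + q(x - y_\ell)$, in which the term $q(x - y_\ell)$ is already bounded by $\varepsilon$. To control $f(y_\ell)$ I would apply nonexpansiveness in the reverse direction, $f(y_\ell) - f(x) \leq q(y_\ell - x) < \varepsilon$, so that $f(y_\ell) \leq f(x) + \varepsilon$. Combining the two estimates yields $g(x) \leq f(x) + \varepsilon + \varepsilon = f(x) + 2\varepsilon$, which completes the argument.

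The one point I would flag as the crux — even though the computation is short — is that $q$ is a weak Minkowski norm and need not be symmetric, so $q(x - y_\ell)$ and $q(y_\ell - x)$ can genuinely differ. The first quantity is the one appearing in the formula for $g$, whereas the second is the one produced by the nonexpansiveness bound on $f(y_\ell)$. This is precisely why the net must be taken with respect to the symmetrization $\max \{ q(\cdot), q(-\,\cdot) \}$ rather than $q$ itself: controlling only one of the two directions would leave the other term uncontrolled, and the factor $2$ in the final bound is the combined cost of the two one-sided estimates.
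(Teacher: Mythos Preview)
Your proof is correct and follows essentially the same approach as the paper: both arguments obtain $f(x)\leq g(x)$ directly from nonexpansiveness, then pick a net point $y_\ell$ with $\max\{q(x-y_\ell),q(y_\ell-x)\}<\varepsilon$ and bound $f(y_\ell)+q(x-y_\ell)$ by $f(x)+q(y_\ell-x)+q(x-y_\ell)\leq f(x)+2\varepsilon$. Your additional remark explaining why the symmetrized net is needed is a helpful clarification not made explicit in the paper.
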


\begin{proof}
  Let $x \in K$.
  Since $f$ is nonexpansive with respect to $q$, we have $f(x) \leq f(y_\ell) + q(x-y_\ell)$ for every $\ell \in \{1,\dots,m\}$.
  Thus, $f(x) \leq g(x)$.
  We also know that there is an index $\ell_0$ such that $\max \{ q(x-y_{\ell_0}),q(y_{\ell_0}-x) \} < \varepsilon$.
  Hence,
  \[
    f(y_{\ell_0}) + q(x-y_{\ell_0}) \leq f(x) + q(y_{\ell_0}-x) + q(x-y_{\ell_0}) \leq f(x) + 2 \varepsilon \enspace ,
  \]
  from which the second inequality follows.
\end{proof}

\begin{corollary}
  Let $F: \R^n \to \R^n$ be a payment-free Shapley operator.
  Then, for all $\varepsilon > 0$, there exists a payment-free Shapley operator $G: \R^n \to \R^n$ such that
  \[
    F_i(x) \leq G_i(x) \leq F_i(x) +\varepsilon \|x\|_\infty
  \]
  for all $x \in \R^n$ and $1 \leq i \leq n$, which can be represented in the form
  \begin{align*}
    \label{eq:PaymentFreeOperatorApprox}
    G_i(x) = \min_{a \in A_i} \; \max_{b \in B_{i,a}} \; \sum_{j=1}^n P_{i j}^{a b} x_j, \quad \forall x \in \R^n, \quad \forall i \in \{1,\dots,n\}\enspace ,
  \end{align*}
  where $A_i$ and $B_{i,a}$ are {\em finite} sets, and every $P_i^{a b} = (P_{i j}^{a b})_{1 \leq j \leq n}$ is a stochastic vector with at most two positive coordinates.
\end{corollary}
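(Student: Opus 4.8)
The plan is to argue coordinate by coordinate and to use positive homogeneity to reduce the claimed multiplicative estimate to a uniform additive one on the unit sphere $S := \{x \in \R^n \mid \|x\|_\infty = 1\}$. Fix $\varepsilon > 0$ and an index $i$. Each coordinate map $F_i$ is monotone, additively and positively homogeneous, hence nonexpansive with respect to the weak Minkowski norm $\sft$; by the proof of Theorem~\ref{thm:RepresentationN} (see also Remark~\ref{rk:moreau}) it satisfies $F_i(x) = \min_{y \in \R^n}\{F_i(y) + \sft(x-y)\}$, and the additive-homogeneity change of variable $z = y - F_i(y)\unit$ (as in the proof of Corollary~\ref{coro:RepresentationMAH}) shows that this minimum may be restricted to $A_i = \{a \in \R^n \mid F_i(a)=0\}$, giving $F_i(x) = \min_{a \in A_i}\sft(x-a)$. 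I would also fix the representation~\eqref{eq:PaymentFreeOperatorproof} coming from Corollary~\ref{coro:PaymentFreeOperator}, whose inner pieces $h_{i,a}(x) := \max_{p \in \Delta_n,\, \<p,a> \leq 0}\<p,x>$ are positively homogeneous; note that any $p \in \Delta_n$ with $\<p,a> \leq 0$ satisfies $\<p,x> \leq \<p,x-a> \leq \sft(x-a)$, so that $h_{i,a} \leq \sft(\cdot - a)$ pointwise.

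Next I would apply the preceding proposition with $q = \sft$, $f = F_i$, and $K = S$, choosing a finite $(\varepsilon/2)$-net $(y_\ell)_{1 \leq \ell \leq m}$ of $S$ with respect to $\sft$ (which exists since $\sft$ is continuous and $S$ is compact). The proposition gives $F_i(x) \leq \min_\ell\{F_i(y_\ell) + \sft(x - y_\ell)\} \leq F_i(x) + \varepsilon$ for all $x \in S$. Setting $z_\ell := y_\ell - F_i(y_\ell)\unit \in A_i$ and using $\sft(w - \lambda\unit) = \sft(w) - \lambda$ together with additive homogeneity, the approximant rewrites as $\min_\ell \sft(x - z_\ell)$; thus $A_i' := \{z_1,\dots,z_m\}$ is a finite subset of $A_i$ with $\min_{a \in A_i'}\sft(x-a) \leq F_i(x) + \varepsilon$ on $S$.

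Finally I would set $G_i(x) := \min_{a \in A_i'} h_{i,a}(x)$. Since $A_i' \subseteq A_i$, restricting the outer minimum in~\eqref{eq:PaymentFreeOperatorproof} to $A_i'$ gives $G_i \geq F_i$, while $h_{i,a} \leq \sft(\cdot - a)$ yields $G_i(x) \leq \min_{a \in A_i'}\sft(x-a) \leq F_i(x) + \varepsilon$ on $S$. As $G_i$, $F_i$ and $\|\cdot\|_\infty$ are positively homogeneous of degree one, the inequality $F_i(x) \leq G_i(x) \leq F_i(x) + \varepsilon\|x\|_\infty$ on $S$ extends to all of $\R^n$ by scaling (and is trivial at $0$). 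Each $G_i$, being a finite minimum of finite maxima of linear forms $\<p,\cdot>$ with $p \in \Delta_n$, is monotone, additively and positively homogeneous, so $G = (G_i)_i$ is a payment-free Shapley operator; the stated form with \emph{finite} $A_i$ follows by taking $B_{i,a}$ to be the extreme points of the polytope $\{p \in \Delta_n \mid \<p,a> \leq 0\}$ and $P_i^{ab} = b$, the bound of two positive coordinates being exactly the convex-geometry fact invoked in Corollary~\ref{coro:PaymentFreeOperator}. The main obstacle is the tension between the approximation proposition, whose output $\min_\ell\{F_i(y_\ell)+\sft(x-y_\ell)\}$ is \emph{not} positively homogeneous, and the requirement that $G$ be a payment-free (hence positively homogeneous) Shapley operator. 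It is resolved by the two maneuvers above — pushing the net points into $A_i$ via additive homogeneity, and replacing the $\sft$-balls by the dominated homogeneous pieces $h_{i,a}$ — after which homogeneity upgrades the uniform additive error on $S$ into the global multiplicative error $\varepsilon\|x\|_\infty$.
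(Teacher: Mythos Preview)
Your argument is correct and follows essentially the same strategy as the paper: reduce to the unit sphere by positive homogeneity, take an $\varepsilon/2$-net, and build $G_i$ by restricting the outer minimum in the representation~\eqref{eq:PaymentFreeOperatorproof} to a finite subset of $A_i$. The only difference is in the execution of the upper bound. The paper selects, for each net point $z_\ell$, an action $a_{i\ell}\in A_i$ at which the minimum in $F_i(z_\ell)$ is attained, so that $G_i(z_\ell)=F_i(z_\ell)$ exactly, and then concludes from the sup-norm nonexpansiveness of both $F$ and $G$; you instead translate the net points themselves into $A_i$ via $z_\ell=y_\ell-F_i(y_\ell)\unit$ and close via the domination $h_{i,a}\leq\sft(\cdot-a)$ together with the preceding approximation proposition. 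Your route has the merit of making the corollary a genuine consequence of that proposition, while the paper's path is a little shorter and bypasses the intermediate $\sft(\cdot-a)$ representation. Either way the finite $B_{i,a}$ and the at-most-two-positive-coordinates conclusion come from the same extreme-point argument.
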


\begin{proof}
  Let $(z_\ell)_{1 \leq \ell \leq m}$ be an $\varepsilon / 2$-net of the unit sphere of $\R^n$
  with respect to the sup-norm.
  Using the minimax representation of $F$, steming from Corollary~\ref{coro:PaymentFreeOperator},
  we know that for all $i \in \{1,\dots,n\}$ and $\ell \in \{1,\dots,m\}$, there exists an action
  $a_{i \ell} \in A_i$ for which the minimum is attained in formula~\eqref{eq:PaymentFreeOperator}
  applied to $F_i(z_\ell)$.
  Let $A^*_i := \{a_{i \ell} \mid 1 \leq \ell \leq m \}$ be the finite subset of $A_i$
  containing all the latter optimal actions in state $i \in \{1,\dots,n\}$.
  Then, let $G: \R^n \to \R^n$ be the payment-free Shapley operator the $i$th coordinate map
  of which is given by
  \begin{equation*}
    G_i(x) := \min_{a \in A^*_i} \; \max_{b \in B_{i,a}} \; \sum_{j=1}^{n} P_{i j}^{a b} x_j
    \enspace , \quad \forall x \in \R^n \enspace ,
  \end{equation*}
  where the action spaces $B_{i,a}$ are the same as in the minimax
  representation~\eqref{eq:PaymentFreeOperator} of $F_i$.
  In particular, we know that they are finite and that all the vectors
  $P_i^{a b} = (P_{i j}^{a b})_{1 \leq \ell \leq m}$ are stochastic, with at most
  two positive coordinates.

  By construction, we have $F(x) \leq G(x)$ for all vectors $x \in \R^n$,
  with equality for every $z_\ell$.
  Now, given a vector $x$ in the unit sphere of $\R^n$, we can choose a vector $z_\ell$
  such that $\|x-z_\ell\|_\infty \leq \varepsilon / 2$. 
  Since both $F$ and $G$ are nonexpansive with respect to the sup-norm, we deduce that
  for all $i \in \{1,\dots,n\}$,
  \[
    G_i(x) - F_i(x) \leq G_i(z_\ell) + \varepsilon / 2 - F_i(x) =
    F_i(z_\ell) - F_i(x) + \varepsilon / 2 \leq \varepsilon \enspace .
  \]
  The conclusion follows from the positive homogeneity of $F$ and $G$.
\end{proof}

\bibliographystyle{amsalpha}
\bibliography{references}

\end{document}